\documentclass[a4paper,reqno]{amsart}
\usepackage[a4paper,hscale=0.72,vscale=0.72,centering]{geometry}

\newcommand{\erre}{\mathbb{R}}
\newcommand{\E}{\mathbb{E}}

\renewcommand{\P}{\mathbb{P}}
\newcommand{\m}{\bar{\mu}}

\newcommand{\dom}{\mathop{\mathrm{dom}}\nolimits}
\newcommand{\ip}[2]{\langle #1,#2 \rangle}
\newcommand{\bip}[2]{\left\langle #1,#2 \right\rangle}

\newsymbol\lesssim 132E

\newtheorem{prop}{Proposition}
\newtheorem{thm}[prop]{Theorem}

\newtheorem{defi}[prop]{Definition}
\theoremstyle{remark}
\newtheorem{rmk}[prop]{Remark}

\begin{document}
\title[Uniqueness of mild solutions]{On uniqueness of mild solutions for
  dissipative stochastic evolution equations}

\author{Carlo Marinelli}
\address[C.~Marinelli]{Facolt\`a di Economia, Universit\`a di Bolzano,
  Piazza Universit\`a 1, I-39100 Bolzano, Italy, and Dipartimento di
  Matematica, Universit\`a di Trento, I-38123 Trento, Italy.}
\urladdr{http://www.uni-bonn.de/$\sim$cm788}

\author{Michael R\"ockner}
\address[M.~R\"ockner]{Fakult\"at f\"ur Mathematik, Universit\"at
Bielefeld, Postfach 100 131, D-33501 Bielefeld, Germany.}
\email{roeckner@math.uni-bielefeld.de}

\date{21 January 2010}

\begin{abstract}
  In the semigroup approach to stochastic evolution equations, the
  fundamental issue of uniqueness of mild solutions is often
  ``reduced'' to the much easier problem of proving uniqueness for
  strong solutions. This reduction is usually carried out in a formal
  way, without really justifying why and how one can do that. We
  provide sufficient conditions for uniqueness of mild solutions to a
  broad class of semilinear stochastic evolution equations with
  coefficients satisfying a monotonicity assumption.
\end{abstract}

\subjclass[2000]{60H15; 60G57}

\keywords{Stochastic PDE, reaction-diffusion equations, Poisson
  measures, monotone operators.}

\thanks{We are grateful to Viorel Barbu for several helpful
  conversations. A large part of the work for this paper was carried
  out while the first author was visiting the Department of
  Mathematics of Purdue University supported by a fellowship of the
  EU}

\maketitle


\section{Introduction}
The purpose of this work is to prove uniqueness of mild and weak (in
the PDE sense) solutions to dissipative stochastic evolution equations
of the type
\begin{equation}     \label{eq:zero}
du(t) + Au(t)\,dt + Fu(t)\,dt = B(t,u(t))\,dW(t)
+ \int_Z G(t,u(t-),z)\,\bar{\mu}(dt,dz)
\end{equation}
on a real Hilbert space $H$, where $W$ is a Wiener process and $\m$ a
compensated Poisson measure (precise definitions and assumptions will
be given below).

The motivation for this work is that we have not been able to
understand the arguments, often very concise, used in the
literature. It is probably worth elaborating more on this observation,
as most readers will be surprised that we return to such a basic issue
as uniqueness for solutions of equations that are indeed expected to
be well-posed thanks to their dissipative nature. Essentially all
proofs of uniqueness that we have been able to find go as follows:
suppose that (\ref{eq:zero}) has two solutions, and further assume
they are strong. Then an application of It\^o's formula for the square
of the norm, monotonicity, and Gronwall's lemma quickly yield that the
two strong solutions must coincide. Now comes the trouble: if the
solutions are not strong, consider a ``suitable'' regularization. The
problem is that mild solutions, except in the non-interesting cases of
Lipschitz nonlinearities, are constructed using regularizations,
usually of $A$ and $F$. Therefore, without fully elaborating the
argument (something that is not done in the literature we know of),
one could at most prove uniqueness of mild solutions constructed by
regularization, and thus it seems like one is trapped in a vicious
circle. In fact, there is no guarantee that other mild solutions could
be constructed without resorting to regularized equations and limit
passages. We should also clarify that we are not claiming that the
literature contains errors, but it is probably fair to say that the
usual arguments are a bit mysterious and perhaps not fully convincing.

After we realized we could not easily understand how to prove
uniqueness by the ``simple'' regularization procedure alluded to in
the literature, each of us obtained an independent proof, by different
arguments. The two proofs are collected in the present paper.

\section{Preliminaries and notation} \label{sec:prel} Let $T>0$ and
$(\Omega,\mathcal{F},\mathbf{F},\P)$,
$\mathbf{F}=\{\mathcal{F}_t\}_{0\leq t\leq T}$ a fixed filtered
probability space (satisfying the ``usual'' assumptions), on which all
random elements will be defined. The predictable sigma-field on this
stochatic basis will be denoted by $\mathcal{P}$. Let $K$, $H$ be real
separable Hilbert spaces, $\mathcal{L}_2(K \to H)$ the set of
Hilbert-Schmidt operators from $K$ to $H$. By $W$ we shall denote a
$K$-valued Wiener process with covariance operator $Q$, while $\mu$
will denote a Poisson measure on $[0,T] \times Z$ with compensator
$\mathrm{Leb}\otimes m$, where $(Z,\mathcal{Z},m)$ is a measure space,
and $\m:=\mu-\mathrm{Leb} \otimes m$ stands for the compensated
measure associated to $\mu$. Here and in the following we shall denote
Lebesgue measure by $\mathrm{Leb}$.

Let us recall a few facts about stochastic integration with respect to
Wiener processes and compensated Poisson measures. For all unexplained
(but classical) results and notations we refer to \cite{Met} (cf. also
\cite{Sko-studies}). Let $\mathfrak{I}_Q$ denote the set of all
progressively measurable processes $\Phi:[0,T] \to \mathcal{L}_2^Q(K
\to H)$ such that
\[
\P\Big( \int_0^T |\Phi(t)|_Q^2\,dt < \infty \Big) = 1,
\]
where $\mathcal{L}_2^Q(K \to H)$ denotes the space of
linear (possibly unbounded) operators from $K$ to $H$ that belong to
$\mathcal{L}_2(Q^{1/2}K \to H)$, endowed with the norm
\[
|\cdot|_Q := |\cdot|_{\mathcal{L}_2(Q^{1/2}K \to H)}.
\]
Then for any $F \in \mathfrak{I}_Q$ the stochastic integral $(F\cdot
W)_t:=\int_0^t F(s)\,dW(s)$ is well-defined for all $t \leq T$ and $F
\cdot W$ is a local martingale. Similarly, denoting the set of all
(random) functions $\phi:[0,T] \times Z \to H$ that are $\mathcal{P}
\otimes \mathcal{Z}$-measurable and satisfy
\[
\P\Big( \int_0^T\!\int_Z |\phi(s,z)|^2\,m(dz)\,ds < \infty \Big) = 1
\]
by $\mathfrak{I}_m$, we have that, for any $g \in \mathfrak{I}_m$, the
stochastic integral
\[
(g \star \m)_t := \int_{]0,t]}\!\int_Z g(s,z) \,\m(ds,dz)
\]
is well-defined for all $t \leq T$ and $g \star \m$ is a local
martingale.  Moreover, if $F_n(s) \to F(s)$ in probability for
a.a. $s$ and there exists $\Phi \in \mathfrak{I}_Q$ such that
$|F_n(s)|_Q \leq |\Phi(s)|_Q$ $\P$-a.s. for a.a. $s$, then $(F_n \cdot
W)_t \to (F \cdot W)_t$ in probability for all $t$. Similarly, if
$g_n(s,z) \to g(s,z)$ in probability for $\mathrm{Leb}\otimes
m$-a.a. $(s,z)$ and there exists $\phi \in \mathfrak{I}_m$ such that
$|g_n(s,z)| \leq \phi(s,z)$ $\P$-a.s. for $\mathrm{Leb}\otimes
m$-a.a. $(s,z)$, then $(g_n \star \m)_t \to (g \star \m)_t$ in
probability for all $t$. For simplicity of notation we shall write
$\int_0^t$ instead of $\int_{]0,t]}$ when integrating against random
measures, and we shall denote the norm in $L^2(Z,m)$ by $|\cdot|_m$.
Finally, we recall that $u$ is a mild solution to (\ref{eq:zero}) with
initial condition $u(0)=u_0$ if one has
\begin{multline*}
u(t) + \int_0^t e^{-(t-s)A} Fu(s)\,ds \\= e^{-tA}u_0
+ \int_0^t e^{-(t-s)A} B(s,u(s))\,dW(s) 
+ \int_0^t\!\int_Z e^{-(t-s)A} G(s,u(s-),z)\,\m(ds,dz)
\end{multline*}
$\P$-a.s. for all $t \leq T$ (and all integrals are well-defined).


\section{Uniqueness of mild solutions by a bootstrap argument}
In this section we give a proof of uniqueness that is somewhat
reminiscent of bootstrap arguments used in deterministic PDE. In the
first subsection we consider the simpler case of equations with
additive noise (i.e. with $B$ and $G$ in (\ref{eq:zero}) independent
of $u$), and in the second subsection we consider the general
case. The first subsection is included both because the argument is
relatively short, and also because it will be used later to prove
uniqueness for a class of equations which is not covered by the
results of this section.

\subsection{Additive noise}
Assuming that the coefficients in front of the Wiener process and the
Poisson measure do not depend explicitly on the unknown, we obtain
uniqueness by a pathwise argument, thus using essentially a
deterministic argument. More precisely, we have the following
\begin{thm}     \label{thm:add}
  Consider the stochastic evolution equation on $H$
  \begin{equation}     \label{eq:add}
  du(t) + Au(t)\,dt + Fu(t)\,dt = B(t)\,dW(t) + \int_Z G(t,z)\,\m(dt,dz),
  \end{equation}
  where $A$ is a linear monotone operator on $H$, $F$ a (nonlinear)
  operator on $H$ such that $x \mapsto Fx+\eta x$ is monotone for some
  $\eta \in \erre$, $B$ is progressively measurable, $G$ is
  $\mathcal{P}\otimes\mathcal{Z}$-measurable, and
  \[
  \int_0^t \big( \big| e^{(t-s)A}B(s) \big|_Q^2 
          + \big|e^{(t-s)A}G(s,\cdot)\big|^2_m\big) \,ds
  < \infty
  \]
  $\P$-a.s. for all $t \leq T$. Then (\ref{eq:add}) admits at most one
  mild solution $u$ such that $|Fu|_{L^1([0,T] \to H)}<\infty$
  $\P$-a.s..
\end{thm}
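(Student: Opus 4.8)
The plan is to reduce the problem to a purely deterministic, pathwise statement. Since the noise coefficients $B$ and $G$ do not depend on $u$, the stochastic convolutions
\[
Z(t) := \int_0^t e^{-(t-s)A}B(s)\,dW(s) + \int_0^t\!\int_Z e^{-(t-s)A}G(s,z)\,\m(ds,dz)
\]
are a fixed process, independent of the choice of solution. Hence if $u_1$ and $u_2$ are two mild solutions with $Fu_i \in L^1([0,T]\to H)$ $\P$-a.s., their difference $v:=u_1-u_2$ satisfies, for $\P$-a.a.\ $\omega$ and all $t\le T$,
\[
v(t) + \int_0^t e^{-(t-s)A}\big(Fu_1(s)-Fu_2(s)\big)\,ds = 0,
\]
with no stochastic terms and no initial condition (both solutions start at $u_0$). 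So I would fix such an $\omega$ once and for all and argue deterministically from here on.

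Next I would recognize the displayed identity as saying that $v$ is a mild solution (with zero initial datum and zero forcing) of the linear equation $\dot v + Av = -\,(Fu_1-Fu_2) =: -g$, where $g \in L^1([0,T]\to H)$. The standard regularity theory for analytic/contraction semigroups then tells us that $t\mapsto \int_0^t e^{-(t-s)A}g(s)\,ds$ is the unique mild solution and, crucially, coincides with the variational/weak solution; in particular $v$ is absolutely continuous (or at least belongs to $W^{1,1}$ in a suitable weak sense) with $v(0)=0$ and $\dot v + Av = -g$ holding in $H$ (or in $V'$) for a.e.\ $t$. This is the point where I expect to have to be a little careful: I want to justify testing the equation against $v(t)$ itself, i.e.\ to run an energy estimate, and since $A$ may be unbounded one must either invoke a chain rule for $\frac{d}{dt}|v(t)|^2$ valid for mild solutions of linear equations with $L^1$ forcing, or first establish that $v$ is regular enough (e.g.\ by observing $v \in C([0,T]\to H)$ and using that $A$ is monotone so $-e^{-tA}$ is a contraction semigroup and the convolution with an $L^1$ function lands in the domain in an averaged sense). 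I would cite the relevant chain-rule lemma (e.g.\ from Barbu's or Brezis's monotone-operator texts) rather than reprove it.

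Granting the chain rule, the conclusion is immediate: multiplying $\dot v + Av = -(Fu_1 - Fu_2)$ by $v$ and integrating over $[0,t]$ gives
\[
\tfrac12|v(t)|^2 + \int_0^t \ip{Av(s)}{v(s)}\,ds = -\int_0^t \ip{Fu_1(s)-Fu_2(s)}{u_1(s)-u_2(s)}\,ds.
\]
Monotonicity of $A$ makes the second term on the left nonnegative, and the assumption that $x\mapsto Fx+\eta x$ is monotone gives $\ip{Fu_1(s)-Fu_2(s)}{u_1(s)-u_2(s)} \ge -\eta|v(s)|^2$, so
\[
\tfrac12|v(t)|^2 \le \eta \int_0^t |v(s)|^2\,ds,
\]
and Gronwall's lemma forces $v\equiv 0$ on $[0,T]$, i.e.\ $u_1 = u_2$ $\P$-a.s. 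The only genuine obstacle is the regularity/chain-rule step flagged above — ensuring that a mild solution of the linear equation with merely $L^1$ forcing is admissible in the energy identity — and everything else (the cancellation of the noise, monotonicity, Gronwall) is routine.
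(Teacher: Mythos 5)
Your pathwise reduction is exactly the right (and the paper's) first move: with additive noise the stochastic convolutions cancel, so $y:=u_1-u_2$ is, for a.e.\ fixed $\omega$, a mild solution of the deterministic problem $\dot y+Ay=-g$, $y(0)=0$, with $g:=Fu_1-Fu_2\in L^1([0,T]\to H)$, and the endgame (monotonicity of $A$, monotonicity of $F+\eta I$, Gronwall) is also the same. The genuine gap is the middle step, and the fix you propose does not work as stated. From $g\in L^1([0,T]\to H)$ and $A$ merely linear monotone (no analyticity is assumed, and there is no Gelfand triple $V\subset H\subset V'$ in this setting), the mild solution $y(t)=-\int_0^t e^{-(t-s)A}g(s)\,ds$ is only continuous in $H$: it need not take values in $D(A)$, need not be absolutely continuous, and the equation does not hold pointwise in $H$ for a.e.\ $t$. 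Even for analytic semigroups, $L^1$-in-time forcing does not produce strong solutions. Consequently the chain-rule lemmas in Brezis or Barbu you want to cite are not applicable, since their hypotheses ($y\in W^{1,1}$ or $W^{1,2}$ with $y(t)\in D(A)$ a.e.) are exactly what is missing; the energy identity
\[
\tfrac12|y(t)|^2+\int_0^t \ip{Ay(s)}{y(s)}\,ds=-\int_0^t \ip{g(s)}{y(s)}\,ds
\]
cannot simply be asserted for $y$ itself.

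The paper closes precisely this gap by regularizing the operator rather than the solution: replace $A$ by its Yosida approximation $A_\varepsilon=A(I+\varepsilon A)^{-1}$, let $y_\varepsilon$ be the (genuinely strong, since $A_\varepsilon$ is bounded) solution of $\dot y_\varepsilon+A_\varepsilon y_\varepsilon=-g$, $y_\varepsilon(0)=0$, run the energy estimate for $y_\varepsilon$, and then pass to the limit using the Trotter--Kato theorem, which gives $\sup_{t\le T}|y_\varepsilon(t)-y(t)|\to 0$; the error term $\int_0^t\ip{g(s)}{y_\varepsilon(s)-y(s)}\,ds$ is controlled by $\sup_{s\le t}|y_\varepsilon(s)-y(s)|\,\int_0^t|g(s)|\,ds$, which is where the $L^1$ hypothesis on $Fu$ enters. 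An alternative rigorous route would be to invoke B\'enilan-type integral solutions, whose defining inequality (with test element $0$) yields directly $\tfrac12|y(t)|^2\le-\int_0^t\ip{g(s)}{y(s)}\,ds$ for mild solutions with $L^1$ forcing; but you would have to actually appeal to that theory, not to a chain rule, because the regularity your argument presupposes is simply not there.
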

\begin{proof}
Let $u$, $v$ be two mild solutions of (\ref{eq:add}),
and define
\[
y(s) := u(s)-v(s), \qquad g(s) := Fv(s) - Fu(s).
\]
We shall keep $g$ fixed from now on, and we will work
``$\omega$-by-$\omega$''. In particular, the hypotheses imply the
existence of $\Omega' \subset \Omega$ such that $\P(\Omega')=1$ and $g
\in L^1([0,T] \to H)$ for all $\omega \in \Omega'$. Let us fix $\omega
\in \Omega'$ from now on. Then $y$ is a mild solution of the equation
\[
dy(t) + Ay(t)\,dt = g(t)\,dt
\]
with initial condition $y(0)=0$ and $\sup_{t \leq T}|y(t)|<\infty$. Let
$A_\varepsilon:=A(I+\varepsilon A)^{-1}$ denote the Yosida
approximation of $A$, and let $y_\varepsilon$ be the (unique) strong
solution of the equation obtained by replacing $A$ with
$A_\varepsilon$ in the previous one.  Trotter-Kato's approximation
theorem (see e.g. \cite[p.~241]{barbu}) then yields
\[
\lim_{\varepsilon \to 0} \sup_{t \leq T} |y_\varepsilon(t) - y(t)| = 0.
\]
We also have
\[
\frac12 \frac{d}{dt} |y_\varepsilon(t)|^2
+ \ip{A_\varepsilon y_\varepsilon(t)}{y_\varepsilon(t)}
= \ip{g(t)}{y_\varepsilon(t)}
= \ip{g(t)}{y(t)} + \ip{g(t)}{y_\varepsilon(t)-y(t)},
\]
therefore, by the monotonicity of $A_\varepsilon$ and
\[
-\ip{g(t)}{y(t)} = \ip{Fu(t)-Fv(t)}{u(t)-v(t)} \geq -\eta|u(t)-v(t)|^2,
\]
we get
\begin{align*}
|y_\varepsilon(t)|^2 &\leq \eta \int_0^t |y(s)|^2\,ds 
+ \int_0^t \ip{g(s)}{y_\varepsilon(s)-y(s)}\,ds\\
&\leq \eta \int_0^t |y(s)|^2\,ds
+ \sup_{s \leq t} |y_\varepsilon(s)-y(s)| \int_0^t |g(s)|\,ds.
\end{align*}
Letting $\varepsilon \to 0$, we conclude, recalling that $g \in
L^1([0,T] \to H)$, that $y(t)=u(t)-v(t)=0$ for all $t \leq T$ by an
application of Gronwall's inequality. Since $\omega \in \Omega'$ was
arbitrary, we have $u(t)=v(t)$ $\P$-a.s..
\end{proof}
\begin{rmk}
  Note that the above proof does not use anywhere the linearity of
  $A$. The same observation applies to the method used in the next
  subsection.
\end{rmk}

\subsection{General case}
In this subsection we consider the general case of equations with
multiplicative noise under a monotonicity assumption on $A$ and on the
triplet $(F,B,G)$.

Throughout this subsection we shall always assume, without further
mention, that $B:[0,T] \times H \to \mathcal{L}_2^Q$ and $G:[0,T]
\times H \times Z \to H$ satisfy the usual measurability conditions
needed to ensure that the corresponding stochastic integrals with
respect to $W$ and $\m$ are meaningful.
\begin{thm}     \label{thm:boot}
  Assume that $A$ is a linear monotone operator on $H$, $F$ is a
  (nonlinear) operator on $H$, and $B$, $G$ satisfy the monotonicity
  property
  \[
  2 \ip{Fu-Fv}{u-v}
  - \big| B(s,u)-B(s,v) \big|^2_{Q}
  - \big| G(s,u,\cdot)-G(s,v,\cdot) \big|^2_{m}
  \geq \alpha |u-v|^2
  \]
  for all $u$, $v \in \dom(F)$ and all $s \leq T$, for some $\alpha
  \in \erre$ independent of $s$. Then there is at most one c\`adl\`ag
  mild solution of the stochastic evolution equation (\ref{eq:zero})
  such that
  \begin{equation}     \label{eq:strigne}
  \int_0^T \big( |Fu(s)| + |B(s,u(s))|_Q^2 + |G(s,u(s),\cdot)|_m^2 \big)\,ds 
  < \infty
  \end{equation}
  $\P$-a.s..
\end{thm}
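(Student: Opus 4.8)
The plan is to mimic the strategy of Theorem~\ref{thm:add}: given two c\`adl\`ag mild solutions $u$, $v$, set $y:=u-v$, which is then a mild solution of the \emph{linear} equation
\[
dy(t)+Ay(t)\,dt = \big(Fv(t)-Fu(t)\big)\,dt + \big(B(t,u(t))-B(t,v(t))\big)\,dW(t) + \int_Z \big(G(t,u(t-),z)-G(t,v(t-),z)\big)\,\m(dt,dz)
\]
with $y(0)=0$. Here, however, the forcing and noise coefficients depend on $u$ and $v$, so the argument cannot be carried out purely pathwise. Instead I would regularize only the linear part: let $A_\varepsilon:=A(I+\varepsilon A)^{-1}$ be the Yosida approximation and let $y_\varepsilon$ be the strong solution of the same equation with $A$ replaced by $A_\varepsilon$ but with \emph{the same} (now fixed) coefficient processes $g:=Fv-Fu$, $\Phi:=B(\cdot,u)-B(\cdot,v)$, $\psi:=G(\cdot,u(\cdot-),\cdot)-G(\cdot,v(\cdot-),\cdot)$ coming from the original solutions. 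Since these processes are fixed by hypothesis \eqref{eq:strigne}, $y_\varepsilon$ solves a linear SDE in $H$ with bounded generator $A_\varepsilon$ and integrable data, hence exists, is a genuine strong solution, and one may apply It\^o's formula to $|y_\varepsilon(t)|^2$.

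Next I would pass to the limit $\varepsilon\to0$. The convergence $y_\varepsilon\to y$ should follow from a stochastic version of the Trotter--Kato argument: writing $y_\varepsilon$ and $y$ via the variation-of-constants formula with $e^{-tA_\varepsilon}$ and $e^{-tA}$ respectively and using the stochastic dominated convergence statements recalled in Section~\ref{sec:prel} (the pointwise convergence $e^{-tA_\varepsilon}\to e^{-tA}$ strongly, together with the uniform bound $\|e^{-tA_\varepsilon}\|\le1$ by monotonicity), one gets $y_\varepsilon(t)\to y(t)$ in probability for each $t$, and along a subsequence uniformly in $t$ a.s.\ — or at least in a mode strong enough to take limits in the energy identity. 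Applying It\^o's formula to $|y_\varepsilon(t)|^2$ gives
\[
|y_\varepsilon(t)|^2 + 2\int_0^t \ip{A_\varepsilon y_\varepsilon(s)}{y_\varepsilon(s)}\,ds
= 2\int_0^t \ip{g(s)}{y_\varepsilon(s)}\,ds + \int_0^t |\Phi(s)|_Q^2\,ds + \int_0^t\!\int_Z |\psi(s,z)|^2\,m(dz)\,ds + M_\varepsilon(t),
\]
where $M_\varepsilon$ collects the two stochastic integrals and is a local martingale. Drop the nonnegative term $\ip{A_\varepsilon y_\varepsilon}{y_\varepsilon}\ge0$, and in the right-hand side split $\ip{g(s)}{y_\varepsilon(s)} = \ip{g(s)}{y(s)} + \ip{g(s)}{y_\varepsilon(s)-y(s)}$; the crucial term $2\ip{g(s)}{y(s)}=-2\ip{Fu(s)-Fv(s)}{u(s)-v(s)}$ combines with $|\Phi(s)|_Q^2+|\psi(s,\cdot)|_m^2$ and the monotonicity hypothesis to yield $-\alpha|y(s)|^2$ as an upper bound. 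The remaining cross term is controlled by $\sup_{s\le t}|y_\varepsilon(s)-y(s)|\cdot\int_0^T|g(s)|\,ds$, which vanishes as $\varepsilon\to0$. Passing to the limit (using that $M_\varepsilon$ should converge, again by the dominated-convergence statements of Section~\ref{sec:prel}, to the corresponding local martingale $M$ driven by $\Phi$ and $\psi$) one obtains
\[
|y(t)|^2 \le -\alpha\int_0^t |y(s)|^2\,ds + M(t).
\]
A standard localization by stopping times (to handle the local martingale and the a.s.-finite, but not necessarily integrable, integrals in \eqref{eq:strigne}) followed by Gronwall's lemma then forces $y\equiv0$.

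The main obstacle I anticipate is justifying the limit passage $\varepsilon\to0$ rigorously at the level of the It\^o/energy identity: one needs the convergence of $y_\varepsilon$ to $y$ in a topology compatible with passing to the limit inside $\int_0^t\ip{g(s)}{y_\varepsilon(s)}\,ds$ and inside the quadratic-variation and martingale terms simultaneously, and one must ensure the stochastic integrals defining $M_\varepsilon$ really converge (which is where the domination hypotheses of Section~\ref{sec:prel}, applied with $e^{-(t-s)A_\varepsilon}\Phi$ dominated appropriately, come in). A secondary technical point is the localization needed because \eqref{eq:strigne} only gives a.s.\ finiteness, not integrability, so one should work with stopping times $\tau_n\uparrow T$ reducing the local martingale and truncating the $L^1$/$L^2$ norms, carry the Gronwall argument up to each $\tau_n$, and then let $n\to\infty$.
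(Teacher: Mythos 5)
Your overall skeleton (It\^o's formula for the square of the norm applied to a regularized equation, discarding the monotone term, combining $2\ip{g}{y}$ with the quadratic terms through the joint monotonicity assumption, then localization and Gronwall) is the same as the paper's. The genuine gap is the regularization you choose and the limit passage $\varepsilon\to0$, which you flag yourself but do not resolve. You replace $A$ by its Yosida approximation $A_\varepsilon$ while keeping the data $g$, $\Phi$, $\psi$ fixed; you then need (i) convergence of $y_\varepsilon$ to $y$ uniformly in $t$ (or at least pointwise together with a domination uniform in $\varepsilon$) in order to kill the cross term $\int_0^t\ip{g(s)}{y_\varepsilon(s)-y(s)}\,ds$, and (ii) a dominating process for $|y_\varepsilon(s)|$, uniform in $\varepsilon$, in order to apply the stochastic dominated convergence facts of Section \ref{sec:prel} to $\int_0^t\ip{y_\varepsilon(s)}{\Phi(s)\,dW(s)}$ and to the Poisson term. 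The convolution representation plus the strong convergence $e^{-tA_\varepsilon}\to e^{-tA}$ gives only convergence in probability for each fixed $t$; the claimed upgrade ``along a subsequence uniformly in $t$ a.s.'' does not follow from that, and your construction provides no uniform-in-$\varepsilon$ pathwise bound on $\sup_{t\le T}|y_\varepsilon(t)|$: such a bound would require an a priori estimate, which needs square integrability that (\ref{eq:strigne}) does not supply (it is only a.s.\ finiteness), or a localization by stopping times independent of $\varepsilon$ performed \emph{before} the limit in $\varepsilon$ --- none of which is carried out. This is exactly the difficulty the paper's proof sidesteps by regularizing the data rather than $A$: setting $g_\varepsilon=(I+\varepsilon A)^{-1}g$, $C_\varepsilon=(I+\varepsilon A)^{-1}C$, $D_\varepsilon=(I+\varepsilon A)^{-1}D$, the solution of the regularized equation is explicitly $y_\varepsilon=(I+\varepsilon A)^{-1}y$, a strong solution, so that $|y_\varepsilon(s)|\le|y(s)|\le\sup_{t\le T}|y(t)|<\infty$ pathwise, $\sup_{t\le T}|y_\varepsilon(t)-y(t)|\to0$ a.s.\ (the resolvents converge uniformly on the relatively compact range of a c\`adl\`ag path), and every limit passage reduces to the dominated convergence statements of Section \ref{sec:prel}. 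If you want to keep the Yosida route, you must either prove a stochastic Trotter--Kato statement with uniform-in-time convergence under the stated (merely a.s.\ finite) integrability, or set up the stopping times in terms of $y$, $g$, $\Phi$, $\psi$ first and derive uniform-in-$\varepsilon$ estimates for the stopped $y_\varepsilon$; as written, the step is missing.

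A secondary slip: in your It\^o formula the jump contribution must be $\int_0^t\!\int_Z|\psi(s,z)|^2\,\mu(ds,dz)$, an integral against the random measure $\mu$ itself, not against $m(dz)\,ds$; the compensated form is legitimate only after taking expectations under the localization (as the paper does, using that $\mathrm{Leb}\otimes m$ is the compensator of $\mu$), or after absorbing the compensated difference into the local martingale --- in which case your localizing sequence must also reduce that extra purely discontinuous local martingale.
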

\begin{proof}
  Let $u$, $v$ be two c\`adl\`ag mild solutions of (\ref{eq:zero})
  satisfying condition (\ref{eq:strigne}), and define
  \begin{align*}
    y(s) &:= u(s)-v(s),&            g(s) &:= Fu(s) - Fv(s),\\
    C(s) &:= B(s,u(s))-B(s,v(s)),& D(s,z) &:= G(s,u(s-),z)-G(s,v(s-),z).
  \end{align*}
  We shall keep $g$, $C$, $D$ fixed from now on. Then $y$ is a
  mild solution of the equation
  \[
  dy(t) + Ay(t)\,dt + g(t)\,dt = C(t)\,dW(t) 
  + \int_Z D(t,z)\,\bar{\mu}(dt,dz)
  \]
  with initial condition $y(0)=0$, and $\sup_{t \leq T}|y(t)|<\infty$
  $\P$-a.s. (because $y$ has c\`adl\`ag paths). Let us define
  \begin{align*}
    g_\varepsilon(t)   &:= (I+\varepsilon A)^{-1} g(t),\\
    C_\varepsilon(t)   &:= (I+\varepsilon A)^{-1} C(t),\\
    D_\varepsilon(t,z) &:= (I+\varepsilon A)^{-1} D(t,z).
  \end{align*}
  Then the equation
  \[
  dy(t) + Ay(t)\,dt + g_\varepsilon(t)\,dt = C_\varepsilon(t)\,dW(t)
  + \int_Z D_\varepsilon(t,z)\,\bar{\mu}(dt,dz),
  \]
  with initial condition $y(0)=0$, admits a unique mild solution
  $y_\varepsilon$, which is also a strong solution. One can actually
  immediately verify that $y_\varepsilon(t)=(I+\varepsilon
  A)^{-1}y(t)$, so that, in particular,
  \begin{equation}     \label{eq:ipse}
  \lim_{\varepsilon \to 0} \; \sup_{t \leq T}
  \big| y_\varepsilon(t)-y(t) \big| = 0
  \qquad \P\text{-a.s.}
  \end{equation}
  (we shall omit the indication that statements are meant to hold
  $\P$-a.s. in the rest of the proof, if no confusion may arise).
  It\^o's formula for the square of the norm yields
  \begin{equation} \label{eq:ito}
  \begin{aligned}
  |y_\varepsilon(t)|^2 &+ 2\int_0^t \ip{Ay_\varepsilon(s)}{y_\varepsilon(s)}\,ds
  + 2\int_0^t \ip{g_\varepsilon(s)}{y_\varepsilon(s)}\,ds\\
  &= 2\int_0^t \ip{y_\varepsilon(s)}{C_\varepsilon(s)\,dW(s)} 
  + 2\int_0^t\!\int_Z \ip{y_\varepsilon(s-)}{D_\varepsilon(s,z)}\,\bar{\mu}(ds,dz)\\
  &+ \int_0^t |C_\varepsilon(s)|^2_Q\,ds 
  + \int_0^t\!\int_Z |D_\varepsilon(s,z)|^2\,\mu(ds,dz).
  \end{aligned}
  \end{equation}
  Clearly we have $g_\varepsilon(t) \to g(t)$ for all $t \leq T$ as
  $\varepsilon \to 0$, and
  \begin{gather*}
    \ip{g_\varepsilon(s)}{y_\varepsilon(s)} \leq |g(s)|\,\sup_{s\leq t} |y(s)|,\\
    \int_0^t |g(s)|\,\sup_{s\leq t} |y(s)|\,ds = \sup_{s\leq t} |y(s)|
    \int_0^t |g(s)|\,ds < \infty,
  \end{gather*}
  hence, by the dominated convergence theorem,
  \[
  \lim_{\varepsilon \to 0} \int_0^t
  \ip{g_\varepsilon(s)}{y_\varepsilon(s)}\,ds = \int_0^t
  \ip{g(s)}{y(s)}\,ds.
  \]
  For any $s \leq T$, setting $\tilde{C}(s): K \to \erre$,
  $\tilde{C}(s): \zeta \mapsto \ip{y(s)}{C(s)\zeta}$, and defining
  $\tilde{C}_\varepsilon$ replacing $y$ and $C$ with $y_\varepsilon$
  and $C_\varepsilon$, respectively, we have
  $|\tilde{C}_\varepsilon(s) - \tilde{C}(s)|_Q \to 0$ in
  probability. By the inequality
  \[
  |\tilde{C}_\varepsilon(s)|_Q \leq |y_\varepsilon(s)| \,
  |C_\varepsilon(s)|_Q \leq |y(s)| \, |C(s)|_Q
  \]
  we infer
  \[
  \int_0^T |\tilde{C}_\varepsilon(s)|^2_Q\,ds \leq \sup_{s\leq T}
  |y(s)|^2 \int_0^T |C(s)|_Q^2 < \infty.
  \]
  Since the above bounds are uniform with respect to $\varepsilon$, we
  immediately deduce that we can apply the convergence results for
  stochastic integrals mentioned in Section \ref{sec:prel}, obtaining
  \[
  \int_0^t \ip{y_\varepsilon(s)}{C_\varepsilon(s)\,dW(s)} = \int_0^t
  \tilde{C}_\varepsilon(s)\,dW(s) \to \int_0^t \tilde{C}(s)\,dW(s) =
  \int_0^t \ip{y(s)}{C(s)\,dW(s)}
  \]
  in probability for all $t$.  An analogous argument proves that we
  have
  \[
  \int_0^t\!\int_Z
  \ip{y_\varepsilon(s-)}{D_\varepsilon(s,z)}\,\bar{\mu}(ds,dz) \to
  \int_0^t\!\int_Z \ip{y(s-)}{D(s,z)}\,\bar{\mu}(ds,dz)
  \]
  in probability for all $t$.

  Passing to the limit as $\varepsilon \to 0$ in (\ref{eq:ito}) we are
  left with
  \[
  |y(t)|^2 + 2\int_0^t \ip{g(s)}{y(s)}\,ds \leq M(t) + \int_0^t
  |C(s)|^2_Q\,ds + \int_0^t\!\int_Z |D(s,z)|^2\,\mu(ds,dz),
  \]
  where $M$ is the local martingale defined by
  \[
  M(t) = 2 \int_0^t \ip{y(s)}{C(s)\,dW(s)} + 2\int_0^t \! \int_Z
  \ip{y(s-)}{D(s,z)}\,\bar{\mu}(ds,dz).
  \]
  Let us define the sequences of stopping times
  \begin{align*}
    \tau^1_n &:= \inf\Big\{ t \geq 0:\;\int_0^t |C(s)|_Q^2\,ds \geq n \Big\},\\
    \tau^2_n &:= \inf\Big\{ t \geq 0:\;\int_0^t |D(s,\cdot)|_m^2\,ds
    \geq n \Big\},
  \end{align*}
  and note that both are predictable (as hitting times of continuous
  adapted processes). Then $\tau_n:=\tau^1_n \wedge \tau^2_n$ is
  easily seen to be a localizing sequence of stopping times for $M$,
  so that we get
  \begin{multline*}
    \E |y(\tau_n \wedge t)|^2 + 2\E\int_0^{\tau_n \wedge t} \ip{g(s)}{y(s)}\,ds\\
    \leq \E\int_0^{\tau_n \wedge t} |C(s)|^2_Q\,ds + \E\int_0^{\tau_n
      \wedge t}\!\int_Z |D(s,z)|^2\,\mu(ds,dz).
  \end{multline*}
  We also have, recalling that $\mathrm{Leb} \otimes m$ is the
  compensator of $\mu$,
  \begin{align*}
    \E\int_0^{\tau_n \wedge t}\!\int_Z |D(s,z)|^2\,\mu(ds,dz)
    &= \E\int_0^t \! \int_Z |D(s,z)|^2\mathbf{1}_{s\leq\tau_n}\,\mu(ds,dz)\\
    &= \E\int_0^t \! \int_Z |D(s,z)|^2\mathbf{1}_{s\leq\tau_n}\,m(dz)\,ds\\
    &= \E\int_0^{\tau_n \wedge t}\!\int_Z |D(s,z)|^2\,m(dz)\,ds,
  \end{align*}
  hence
  \[
  \E |y(\tau_n \wedge t)|^2 + \E\int_0^{\tau_n \wedge t} \big(
  2\ip{g(s)}{y(s)} - |C(s)|^2_Q - |D(s,\cdot)|_m^2\big)\,ds \leq 0,
  \]
  and, by the monotonicity assumption on the coefficients,
  \[
  0 \geq \E |y(\tau_n \wedge t)|^2 + 2\alpha \E\int_0^{\tau_n \wedge t}
  |y(s)|^2\,ds
  = \E |y(\tau_n \wedge t)|^2 + 2\alpha \E\int_0^t
  \E|y(\tau_n \wedge s)|^2\,ds
  \]
  Appealing to Gronwall's inequality and recalling that $\tau_n \to
  \infty$ as $n \to \infty$, we obtain $y(s)=u(s)-v(s)=0$
  $\P$-a.s. for all $s \leq T$, thus completing the proof.
\end{proof}
\begin{rmk}
  Note that uniqueness in the previous theorem is obtained in the
  class of solutions satisfying (\ref{eq:strigne}), which is stronger
  than necessary for existence. On the other hand, since stochastic
  convolutions of pseudo-contraction semigroups with respect to
  general (locally square integrable) martingales are c\`adl\`ag (see
  e.g. \cite{Kote-Doob}), it follows that solutions to (\ref{eq:zero})
  will also be c\`adl\`ag.
\end{rmk}


\section{Uniqueness of weak solutions}
In this section we prove that (\ref{eq:zero}) admits a unique weak
solution. By weak solution we shall always mean weak solution in the
analytic sense, not in the probabilistic sense. In particular, we
shall say that $u$ is a weak solution of (\ref{eq:zero}) if
\begin{multline*}
\ip{u(t)}{\phi} + \int_0^t \ip{u(s)}{A^*\phi}\,ds
+ \int_0^t \ip{Fu(s)}{\phi}\,ds\\
= \Big\langle \int_0^t B(s,u(s))\,dW(s), \phi \Big\rangle
+ \int_0^t\!\int_Z \ip{G(s,u(s-),z)}{\phi}\,\bar{\mu}(ds,dz)
\end{multline*}
for all $\phi \in D(A^*)$. Here and in the following $A^*$ stands for
the adjoint of $A$.

The assumptions of Theorem \ref{thm:boot} will be in force throughout
this section.
\begin{thm}
  The stochastic equation (\ref{eq:zero}) admits at most one
  c\`adl\`ag weak solution satisfying the integrability condition
  (\ref{eq:strigne}).
\end{thm}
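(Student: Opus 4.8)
The plan is to show that any weak solution in the stated class is in fact a mild solution, and then invoke Theorem~\ref{thm:boot}. The standard route is the classical equivalence between weak (analytic) and mild solutions for linear equations driven by noise: if $u$ is a weak solution of \eqref{eq:zero}, then for fixed $t$ one tests the equation against $\phi = e^{-(t-s)A^*}\psi$ for $\psi \in D(A^*)$, computes $\frac{d}{ds}\ip{u(s)}{e^{-(t-s)A^*}\psi}$, and integrates in $s$ from $0$ to $t$; the terms involving $A^*$ cancel, and one is left with the mild formulation tested against $\psi$. Since $D(A^*)$ is dense in $H$, this yields the mild identity in $H$. First I would make this rigorous: the map $s \mapsto e^{-(t-s)A^*}\psi$ is $C^1$ with derivative $A^* e^{-(t-s)A^*}\psi$ (using that $A$ is monotone and linear, so $-A$ generates a $C_0$-semigroup of contractions and $-A^*$ generates the adjoint semigroup), and one applies the product/integration-by-parts rule for the pairing against the semimartingale $u$. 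The stochastic-integral terms transform by the stochastic Fubini theorem, e.g.
\[
\int_0^t \Big\langle \int_0^s B(r,u(r))\,dW(r),\, A^* e^{-(t-s)A^*}\psi \Big\rangle ds
= \Big\langle \int_0^t e^{-(t-s)A} B(s,u(s))\,dW(s) - \int_0^t B(s,u(s))\,dW(s),\, \psi \Big\rangle,
\]
and similarly for the $\m$-integral; these manipulations are legitimate because the integrability condition \eqref{eq:strigne} together with the c\`adl\`ag property ensures all the relevant integrals are finite.

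Concretely, the steps are: (i) fix $t \leq T$ and $\psi \in D(A^*)$, and write down the weak identity at each $s \in [0,t]$ with test function $e^{-(t-s)A^*}\psi \in D(A^*)$; (ii) justify that $s \mapsto \ip{u(s)}{e^{-(t-s)A^*}\psi}$ is a semimartingale and apply It\^o's product rule, noting the absolutely continuous part of $e^{-(t-s)A^*}\psi$ contributes $\int_0^t \ip{u(s)}{A^* e^{-(t-s)A^*}\psi}\,ds$, which exactly cancels the corresponding term coming from the weak formulation of $du$; (iii) apply the stochastic Fubini theorem to the $dW$ and $\m$ terms and the ordinary Fubini theorem to the $Fu$ term to recognize the convolutions $\int_0^t e^{-(t-s)A}(\cdot)\,ds$; (iv) conclude that $\ip{u(t)}{\psi}$ equals the inner product of the mild-solution right-hand side with $\psi$, for all $\psi \in D(A^*)$, hence (by density) $u$ is a mild solution; (v) apply Theorem~\ref{thm:boot}. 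Steps (ii) and (iii) are where the integrability hypothesis \eqref{eq:strigne} and c\`adl\`ag regularity are used to make everything well-defined.

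The main obstacle is step (iii): justifying the stochastic Fubini theorem for the Poisson-measure term under the weak integrability assumption \eqref{eq:strigne}, rather than under the more comfortable $L^2(\Omega)$-type bounds in which stochastic Fubini is usually stated. The remedy is the same localization already used in the proof of Theorem~\ref{thm:boot}: introduce the stopping times $\tau_n$ at which $\int_0^\cdot |B(s,u(s))|_Q^2\,ds$ and $\int_0^\cdot |G(s,u(s),\cdot)|_m^2\,ds$ reach level $n$, prove the mild identity on $[0,\tau_n \wedge t]$ where the coefficients are genuinely square-integrable and the classical stochastic Fubini theorem applies, and then let $n \to \infty$ using $\tau_n \to \infty$ and the convergence-in-probability criteria for stochastic integrals recalled in Section~\ref{sec:prel}. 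A secondary (but routine) point is checking that the c\`adl\`ag version of $u$ makes the pointwise-in-$t$ statements meaningful; this is immediate since the mild and weak formulations are being compared at each fixed $t$. Once $u$ is identified as a mild solution, uniqueness is not proved afresh but simply quoted from Theorem~\ref{thm:boot}, so no new monotonicity argument is needed.
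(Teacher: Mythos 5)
Your approach is correct in outline, but it is genuinely different from the paper's proof. You reduce the statement to Theorem~\ref{thm:boot} by first showing that every c\`adl\`ag weak solution satisfying (\ref{eq:strigne}) is a mild solution, via the classical weak--mild equivalence: testing against $s\mapsto e^{-(t-s)A^*}\psi$, a stochastic Fubini interchange for the $dW$- and $\bar{\mu}$-integrals, and localization by the stopping times at which $\int_0^\cdot|B(s,u(s))|_Q^2\,ds$ and $\int_0^\cdot|G(s,u(s),\cdot)|_m^2\,ds$ reach level $n$ to compensate for the merely $\P$-a.s.\ finite integrability. This is precisely the reduction acknowledged in the remark following the theorem (which cites an infinite-dimensional stochastic Fubini theorem and Chojnowska-Michalik), so your argument is legitimate and shorter, since uniqueness itself is simply quoted from Theorem~\ref{thm:boot}. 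The paper instead proves uniqueness directly at the level of the weak formulation, with no Fubini-type interchange: it tests against $\tilde e_k=(I+\varepsilon A^*)^{-1}e_k$ for an orthonormal basis, applies the scalar It\^o formula to $\phi_k=\ip{\tilde e_k}{u-v}$, sums over $k\leq N$, passes to the limit first in $N$ and then in $\varepsilon$ using only the convergence results for stochastic integrals of Section~\ref{sec:prel}, and then concludes by the same localization and Gronwall step as in Theorem~\ref{thm:boot}. Your route buys brevity at the price of importing a stochastic Fubini theorem valid for Poisson integrands; the paper's route buys self-containedness and an independent derivation of the energy inequality, in the spirit announced in the introduction. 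Three small repairs to your write-up: the displayed identity has its sign reversed, since $\int_r^t A^*e^{-(t-s)A^*}\psi\,ds=\psi-e^{-(t-r)A^*}\psi$, so the right-hand side should read $\bip{\int_0^t B(s,u(s))\,dW(s)-\int_0^t e^{-(t-s)A}B(s,u(s))\,dW(s)}{\psi}$; ``linear monotone'' alone does not yield a $C_0$-semigroup, you need $A$ maximal monotone (as the paper implicitly assumes by using $e^{-tA}$ and $(I+\varepsilon A)^{-1}$); and the product rule in your step (ii) cannot be applied verbatim because the weak identity is stated only for constant test functions, so the rigorous version must go through Riemann-sum approximation of the time-dependent test function or through the integrated identity plus stochastic Fubini, which you in any case invoke.
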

\begin{proof}
  Let $u$ and $v$ be two weak solutions of (\ref{eq:zero}), and set
  $\tilde{e}_k:=(I+\varepsilon A^*)^{-1} e_k$, where
  $\{e_k\}_{k\in\mathbb{N}}$ is a complete orthonormal basis of
  $H$. We can then write (all claims will meant to hold $\P$-a.s.,
  if not otherwise stated)
  \begin{align*}
  \bip{\tilde{e}_k}{u(t)-v(t)} &+
  \int_0^t \bip{A^*\tilde{e}_k}{u(s)-v(s)}\,ds
  + \int_0^t \bip{\tilde{e}_k}{Fu(s)-Fv(s)}\,ds\\
  &= \int_0^t \bip{\tilde{e}_k}{\big(B(s,u(s))-B(s,v(s))\big)\,dW(s)}\\
  &\quad + \int_0^t\!\int_Z \bip{\tilde{e}_k}{G(s,u(s-),z)-G(s,v(s-),z)}
                   \,\bar{\mu}(ds,dz)\\
  &=: \psi_k(t)+\xi_k(t).
  \end{align*}
  Setting $\phi_k(t):=\bip{\tilde{e}_k}{u(t)-v(t)}$, It\^o's formula
  yields
  \begin{equation}     \label{eq:k}
  \phi_k(t)^2 = 2\int_0^t \phi_k(s-)\,d\phi_k(s) + [\phi_k](t),
  \end{equation}
  where
  \begin{align*}
  \int_0^t \phi_k(s-)\,d\phi_k(s) &=
  - \int_0^t \bip{A^*\tilde{e}_k}{u(s)-v(s)} \bip{\tilde{e}_k}{u(s)-v(s)}\,ds\\
  &\quad - \int_0^t \bip{\tilde{e}_k}{Fu(s)-Fv(s)}
  \bip{\tilde{e}_k}{u(s)-v(s)}\,ds\\
  &\quad + \int_0^t \phi_k(s)\,d\psi_k(s) + \int_0^t
  \phi_k(s-)\,d\xi_k(s),
  \end{align*}
  and
\begin{gather*}
  [\phi_k](t) = I_k^1(t) + I_k^2(t),\\
  \begin{split}
  I_k^1(t) &= \Big[ \int_0^\cdot
              \bip{\tilde{e}_k}{\big(B(s,u(s))-B(s,v(s))\big)\,dW(s)}
              \Big](t)\\
           &= \Big[ \int_0^\cdot
              \bip{\big(B(s,u(s))^*-B(s,v(s))^*\big)\tilde{e}_k}{dW(s)}
              \Big](t)\\
           &= \int_0^t \big| (I+\varepsilon A)^{-1}
              \big(B(s,u(s))-B(s,v(s))\big)e_k \big|^2\,ds,\\
  I_k^2(t) &= \Big[ \int_0^\cdot\!\int_Z \bip{\tilde{e}_k}
              {\big(G(s,u(s-),z)-G(s,v(s-),z)\big)}\bar{\mu}(ds,dz)
              \Big](t)\\
           &= \int_0^t\!\int_Z \big| (I+\varepsilon A)^{-1}
              \big(G(s,u(s-),z)-G(s,v(s-),z)\big)e_k \big|^2\,\mu(ds,dz).
  \end{split}
\end{gather*}
Note that since $A^*$ and $(I+\varepsilon A^*)^{-1}$ commute, and
$(I+\varepsilon A^*)^{-1}=((I+\varepsilon A)^{-1})^*$, the dominated
convergence theorem yields the following relations:
\begin{align*}
&\sum_{k\leq N} \int_0^t \bip{A^*\tilde{e}_k}{u(s)-v(s)} \bip{\tilde{e}_k}{u(s)-v(s)}\,ds\\
&\hspace*{5em} = \int_0^t \sum_{k\leq N}
                 \bip{e_k}{A(I+\varepsilon A)^{-1} (u(s)-v(s))}
                    \bip{e_k}{(I+\varepsilon A)^{-1} (u(s)-v(s))}\,ds\\
&\hspace*{5em} \xrightarrow{N \to \infty} 
               \int_0^t \bip{A(I+\varepsilon A)^{-1}(u(s)-v(s))}{%
                    (I+\varepsilon A)^{-1}(u(s)-v(s))}ds,
\end{align*}
and
\begin{align*}
&\sum_{k\leq N} \int_0^t \bip{\tilde{e}_k}{Fu(s)-Fv(s)}
         \bip{\tilde{e}_k}{u(s)-v(s)}\,ds\\
&\hspace*{5em} = \int_0^t \sum_{k\leq N}
         \bip{e_k}{(I+\varepsilon A)^{-1}(Fu(s)-Fv(s))}
           \bip{e_k}{(I+\varepsilon A)^{-1}u(s)-v(s)}\,ds\\
&\hspace*{5em} \xrightarrow{N \to \infty} 
         \int_0^t \bip{(I+\varepsilon A)^{-1}(Fu(s)-Fv(s))}{%
                           (I+\varepsilon A)^{-1}(u(s)-v(s))}ds.
\end{align*}
In fact, one has
\[
\bip{A(I+\varepsilon A)^{-1}(u(s)-v(s))}{%
                    (I+\varepsilon A)^{-1}(u(s)-v(s))}
\lesssim_\varepsilon |u(s)-v(s)|^2
\]
and
\[
\bip{(I+\varepsilon A)^{-1}(Fu(s)-Fv(s))}{%
                           (I+\varepsilon A)^{-1}(u(s)-v(s))}
\leq |Fu(s)-Fv(s)| \, |u(s)-v(s)|,
\]
which imply the claim recalling that $\sup_{t\leq T}
|u(t)-v(t)|<\infty$ and $Fu$, $Fv \in L^1([0,T] \to H)$
$\P$-a.s..
Furthermore, let us define, for each $s \leq T$, the operators $C(s): K
\to \erre$,
\[
C(s): \zeta \mapsto \ip{(I+\varepsilon A)^{-1}u(s)-v(s)}
                       {(I+\varepsilon A)^{-1}(B(s,u(s))-B(s,v(s)))\zeta},
\]
and $C_N(s): K \to \erre$,
\[
C_N(s): \zeta \mapsto \sum_{k\leq N} \ip{\tilde{e}_k}{u(s)-v(s)}
\ip{\tilde{e}_k}{(B(s,u(s))-B(s,v(s))\zeta}.
\]
Then it is clear that $C_N(s) \to C(s)$ in probability as $N \to
\infty$ for all $s\leq T$, and
\[
|C_N(s)|_Q \leq |u(s)-v(s)| \, \big|B(u(s)-B(s,v(s))\big|_Q,
\]
\begin{multline*}
\int_0^T |u(s)-v(s)|^2 \, \big|B(u(s)-B(s,v(s))\big|^2_Q\,ds\\
\leq \sup_{s\leq T} |u(s)-v(s)|^2 \int_0^T \big|B(u(s)-B(s,v(s))\big|^2_Q\,ds
< \infty,
\end{multline*}
which implies that $(C_N \cdot W)_t \to (C \cdot W)_t$ in
probability as $N \to \infty$ for all $t \leq T$, or equivalently
\begin{multline*}
\sum_{k\leq N} \int_0^t \phi_k(s)\,d\psi_k(s)\\
\xrightarrow{N \to \infty} M^1_\varepsilon(t):=
\int_0^t \bip{(I+\varepsilon A)^{-1}(u(s)-v(s))}
{(I+\varepsilon A)^{-1}\big(B(s,u(s))-B(s,v(s))\big)\,dW(s)}
\end{multline*}
in probability for all $t \leq T$. An analogous reasoning yields
\begin{gather*}
\sum_{k\leq N} \int_0^t \phi_k(s)\,d\xi_k(s)
\xrightarrow{N \to \infty} M^2_\varepsilon(t),\\
M^2_\varepsilon(t) := 
\int_0^t\!\int_Z \bip{(I+\varepsilon A)^{-1}(u(s)-v(s))}
          {(I+\varepsilon A)^{-1}\big(G(s,u(s-),z)-G(s,v(s-),z)\big)}\,\m(ds,dz)
\end{gather*}
in probability for all $t \leq T$.
Finally, the following obvious inequalities hold:
\begin{align*}
\sum_{k \leq N} I^1_k(t) &\leq \int_0^t \big|
(I+\varepsilon A)^{-1}\big(B(s,u(s))-B(s,v(s))\big) \big|_Q^2\,ds,\\
\sum_{k \leq N} I^2_k(t) &\leq \int_0^t\!\int_Z \big|
(I+\varepsilon A)^{-1}\big(G(s,u(s-),z)-G(s,v(s-),z)\big)\big|^2\,
\mu(ds,dz)
\end{align*}
for all $N$.

Summing up over $k \leq N$ in (\ref{eq:k}) and letting $N \to \infty$
yields
\begin{align*}
& \big| (I+\varepsilon A)^{-1}(u(t)-v(t)) \big|^2\\
&\hspace*{4em} + 2 \int_0^t \bip{A(I+\varepsilon A)^{-1}(u(s)-v(s))}{%
                    (I+\varepsilon A)^{-1}(u(s)-v(s))}ds\\
&\hspace*{4em} + 2 \int_0^t \bip{(I+\varepsilon A)^{-1}(Fu(s)-Fv(s))}{%
                           (I+\varepsilon A)^{-1}(u(s)-v(s))}ds\\
&\qquad \leq M_\varepsilon(t) + \int_0^t \big| (I+\varepsilon A)^{-1}
              \big(B(s,u(s))-B(s,v(s))\big) \big|_Q^2\,ds\\
&\hspace*{4em} + \int_0^t\!\int_Z \big| (I+\varepsilon A)^{-1}
                  \big(G(s,u(s-),z)-G(s,v(s-),z)\big) \big|^2\,\mu(ds,dz)
\end{align*}
where $M_\varepsilon:=M^1_\varepsilon + M^2_\varepsilon$ is a local
martingale. By the monotonicity of $A$, the previous inequality yields
\begin{align*}
& \big| (I+\varepsilon A)^{-1}(u(t)-v(t)) \big|^2\\
&\hspace*{4em} + 2 \int_0^t \bip{(I+\varepsilon A)^{-1}(Fu(s)-Fv(s))}{%
                           (I+\varepsilon A)^{-1}(u(s)-v(s))}ds\\
&\qquad \leq M_\varepsilon(t) + \int_0^t \big| 
              \big(B(s,u(s))-B(s,v(s))\big) \big|_Q^2\,ds\\
&\hspace*{4em} + \int_0^t\!\int_Z \big| 
                  \big(G(s,u(s-),z)-G(s,v(s-),z)\big) \big|^2\,\mu(ds,dz).
\end{align*}
We are now going to pass to the limit as $\varepsilon \to 0$ in the
inequality just obtained. Trivially, the first-term on the right hand
side converges to $|u(t)-v(t)|^2$, while the second term on the
left-hand side converges to
\[
2 \int_0^t \ip{Fu(s)-Fv(s)}{u(s)-v(s)}\,ds
\]
by the dominated convergence, in analogy to a situation already
entountered. The contractivity of $(I+\varepsilon A)^{-1}$ also
implies $M_\varepsilon(t) \to M(t)$ as $\varepsilon \to 0$ in
probability for all $t$, where $M$ is the same local martingale
defined in the proof of Theorem \ref{thm:boot}.  We are thus left with
\begin{multline*}
|u(t)-v(t)|^2 + 2 \int_0^t \ip{Fu(s)-Fv(s)}{u(s)-v(s)}\,ds\\
\leq M(t) + \int_0^t \big| 
              \big(B(s,u(s))-B(s,v(s))\big) \big|_Q^2\,ds
+ \int_0^t\!\int_Z \big| 
                  \big(G(z,u(s))-G(z,v(s))\big) \big|^2\,\mu(ds,dz),
\end{multline*}
and the proof is completed exactly as in the previous section,
i.e. taking a sequence of localizing stopping times for $M$, etc.
\end{proof}

\begin{rmk}
  Using a stochastic Fubini theorem in infinite dimensions (see
  e.g. \cite{Leon-Fub}), it is not difficult to see that weak and mild
  solutions of (\ref{eq:zero}) coincide, provided the integrability
  condition (\ref{eq:strigne}) is satisfied (cf. \cite{choj76}).
\end{rmk}


\section{Uniqueness of generalized solutions}
The purpose of this section is to show that, in certain cases, one can
still prove uniqueness for equations whose solutions $u$ do not
satisfy the integrability condition $Fu \in L^1([0,T]\to H)$
$\P$-a.s.. In fact, in general it is difficult (and we are not aware
of any general results or techniques) to prove well-posedness in the
mild sense without imposing rather restrictive conditions on the
initial condition and on the coefficients of the equations. A possible
way out is to define ``generalized'' mild solutions as limits of
solutions of equations with more regular $u_0$, $B$, and $G$. Let us
make this notion precise. In the following we shall say that $\zeta
\in \mathcal{H}_2(T)$ if $\zeta:[0,T] \to H$ is an adapted process
such that $\sup_{t \leq T} \E|\zeta(t)|^2 < \infty$.
\begin{defi}
  Let
  \[
  \E|u_{0n} - u_0|^2 + \E\int_0^T \big(
  |B_n(t)-B(t)|_Q^2 + |G_n(t,\cdot)-G(t,\cdot)|_m^2
  \big)\,dt \to 0
  \]
  as $n \to \infty$, and assume that the equation
  \[
  du(t) + Au(t)\,dt + Fu(t)\,dt = B_n(t)\,dW(t) + G_n(t,z)\,\m(dt,dz)
  \]
  with initial condition $u(0)=u_{0n}$ admits a unique mild solution
  $u_n \in \mathcal{H}_2(T)$ for all $n \in \mathbb{N}$, such that
  $|u_n-u|_{\mathcal{H}_2(T)} \to 0$ as $n \to \infty$. Then $u$ is
  called a \emph{generalized mild} solution of (\ref{eq:add}).
\end{defi}
Unfortunately we cannot give general sufficient conditions ensuring
well-posedness of (\ref{eq:add}), but we limit ourselves to giving one
criterion which can be verified, for instance, for reaction-diffusion
equations with polynomial nonlinearity $F$, as considered in
e.g. \cite{DP-K} in the case of Wiener noise, and in \cite{cm:rd} in the
case of Poisson noise. In the latter reference one may also
find a fixed-point argument leading to existence and uniqueness of
generalized mild solutions for equations with multiplicative noise.

Uniqueness of generalized mild solutions can be obtained by a priori
estimates for mild solutions. For instance, let $u^1$, $u^2$ be
solutions of (\ref{eq:add}) with initial conditions $u^1_0$, $u^2_0$,
and coefficients $B^1$, $B^2$ and $G^1$, $G^2$, respectively. Assume
that the following estimate holds
\begin{multline*}
\E|u^1(t)-u^2(t)|^2 
\leq N \Big( \E|u^1_0 - u^2_0|^2
+ \E\int_0^t \big( 
  |B^1(s)-B^2(s)|_Q^2 + |G^1(s,\cdot)-G^2(s,\cdot)|_m^2\big) \,ds\Big), 
\end{multline*}
where the constant $N$ depends continuously on $t$. Since the
inequality is stable with respect to the limit passages of the
previous definition, it is immediate to see that the same estimate
holds also for generalized mild solution. This in turn implies that
the generalized mild solution, if it exists, is unique, simply by
taking $u^1_0=u^2_0$ and $G^1=G^2$.


\bibliographystyle{amsplain}
\bibliography{ref}

\end{document}